\algrenewcommand{\alglinenumber}[1]{\footnotesize Step #1:}
\newcommand{\bel}{\begin{equation}\label}
\newcommand{\ee}{\end{equation}}
\def\R{{\mathbb R}}
\def\C{{\mathbb C}}
\def\<{\langle}
\def\>{\rangle}
\def\V{\mathcal V}
\def\W{\mathcal W}
\def\A{\mathcal A}
\def\B{\mathcal B}
\def\C{\mathcal C}
\def\x{\mathbf x}
\def\0{\mathbf 0}
\newtheorem{theorem}{Theorem}[section]
\newtheorem{proposition}[theorem]{Proposition}
\newtheorem{lemma}[theorem]{Lemma}
\theoremstyle{definition}
\newtheorem{problem}{Problem}
\title{Optimality of the recursive Neyman allocation}
\author{Jacek Weso\l owski\thanks{Professor at the Warsaw University of Technology, Warsaw, Poland and Senior Research Statistician at Statistics Poland, Warsaw, Poland,   \href{mailto:wesolo@mini.pw.edu.pl}{wesolo@mini.pw.edu.pl}},  Robert Wieczorkowski\thanks{Senior Research Statistician at Statistics Poland, Warsaw, Poland,  \href{mailto:R.Wieczorkowski@stat.gov.pl}{R.Wieczorkowski@stat.gov.pl}}, Wojciech W\'ojciak\thanks{PhD Student at the Warsaw University of Technology, Warsaw, Poland,  \href{mailto:wojciech.wojciak@gmail.com}{wojciech.wojciak@gmail.com}}}
\date{}
\begin{document}
	\maketitle
	\begin{abstract}
		We derive a formula for the optimal sample allocation in a general stratified scheme under upper bounds on the sample strata-sizes. Such a general scheme includes SRSWOR within strata as a special case. The solution is given in terms of $\V$-allocation with $\V$ being the set of {\em take-all} strata.  We use $\V$-allocation to give a formal proof of optimality of the popular recursive Neyman algorithm, {\em rNa}. This approach is convenient also for a quick proof of optimality of the algorithm of Stenger and Gabler (2005), {\em SGa}, as well as of its modification, {\em coma}, we propose here. Finally, we compare running times of {\em rNa}, {\em SGa} and {\em coma}. Ready-to-use R-implementations of these  algorithms are available on CRAN repository at https://cran.r-project.org/web/packages/stratallo.
	\end{abstract}
	
	\section{Introduction}
	Optimal sample allocation in stratified sampling scheme is one of the basic problems of survey methodology. An abundant body of literature, going back to the classical optimal solution of Tchuprov (1923) and Neyman (1934)  for the case of simple random sampling without replacement (SRSWOR) design in each stratum, is devoted to this issue. In recent years, there has been a growing interest in more refined allocation methods, mostly based on non-linear programming (NLP), see, e.g.  Valliant, Dever and Kreuter (2018) and references therein. Except the NLP methods, which give only approximate solutions (typically sufficiently precise), a number of recursive  allocation methods have been developed over the years.  The recursive Neyman algorithm, described in Remark 12.7.1 in S\"arndal, Swensson, Wretman (1992), seems to be popular among practitioners. For more recent recursive methods see e.g. Kadane (2005), Stenger and Gabler (2005), Gabler, Ganninger and M\"unnich (2012), Friedrich, M\"unnich, de Vries and Wagner (2015) or Wright (2017, 2020). Another non-recursive method, based on fixed point iterations, was proposed in M\"unnich, Wagner and Sachs (2012). In this paper we are concerned with recursive methods. 
	
	Let $U$ be a population of size $N$. For a study variable $\mathcal{Y}$ defined on $U$ we write $y_k=\mathcal{Y}(k)$ to denote its value for unit $k\in U$. The parameter of interest is the total of variable $\mathcal{Y}$ in $U$, $t_{\mathcal{Y}}=\sum_{k\in U}\,y_k$. 
	
	To estimate $t_{\mathcal Y}$ we consider the stratified SRSWOR design, under which the population $U$ is stratified, i.e. $U=\bigcup_{w\in\mathcal W}\,U_w$, where the strata $U_w$, $w\in\W$, are disjoint and non-empty and $\W$ denotes the set of strata labels. We denote by $N_w$ the size of $U_w$, $w\in\W$. A sample $\mathcal S_w$ of size $n_w$ is drawn according to SRSWOR from $U_w$, $w\in\W$. The draws  between strata are independent.  The $\pi$-estimator of  $t_{\mathcal Y}$ is given by $\hat{t}_{st}=\sum_{w\in\W}\tfrac{N_w}{n_w}\,\sum_{k\in\mathcal{S}_w}\,y_k$. It is design unbiased with variance 
	\begin{equation}\label{d2st}
	D^2_{st}=\sum_{w\in \W}\,\tfrac{d_w^2}{n_w}-\sum_{w\in \W}\,\tfrac{d_w^2}{N_w},
	\end{equation}
	where $d_w=N_wS_w$ and  $S_w^2=\tfrac{1}{N_w-1}\sum_{k\in U_w}\,(y_k-\bar{y}_w)^2$ with $\bar{y}_w=\tfrac{1}{N_w}\sum_{k\in U_w}\,y_k$, $w\in\W$.
	
	The problem of optimal sample allocation lies in the determination of the allocation vector $(n_w)_{w\in \W}$ that minimizes \eqref{d2st} subject to 
	\begin{equation}\label{cons1}
	\sum_{w\in \W}\,n_w=n,
	\end{equation}
	with  $n\le N=\sum_{w\in \W}\,N_w$. 
	
	The classical solution to this problem, called traditionally the Tchuprov-Neyman allocation  (Tchuprov 1923; Neyman 1934), has the form
	\begin{equation}\label{Neso}
	n_w^*=n\,\tfrac{d_w}{d},\quad w\in \W,
	\end{equation} 
	where $d=\sum_{v\in \W}\,d_v$. 
	
	It is well-known that  $(n^*_w)_{w\in\W}$ given by \eqref{Neso} may not be feasible since it may violate the natural constraints
	\begin{equation}\label{upper}n_w\le N_w \qquad\mbox{for all}\qquad  w\in\W.\end{equation} 
	
	In other words,  \eqref{Neso} may over-allocate the sample is some strata. Thus, it has to be modified. In {\em Survey Methods and Practice} (2010), the authors write "... a census should be conducted in the over-allocated strata. The
	overall sample size resulting from such over-allocation will then be smaller than the original sample size,
	so the overall precision requirements might not be met. The solution is to increase the sample in the remaining strata where $n_w^*$ is smaller than $N_w$ using the surplus in the sample sizes obtained from the overallocated strata."  This idea is realized through the recursive Neyman algorithm (referred to by {\em rNa} in the sequel). It is a popular tool in everyday survey practice, though  its optimality remains an open question. 
	
	An alternative approach was introduced in Stenger and Gabler (2005), where the authors proposed another allocation algorithm (referred to by {\em SGa} in the sequel) and established its optimality. This algorithm can be descibed as follows: first, order strata labels in $\W$ with respect to non-increasing values of $S_w$, $w\in\W$; second, perform a  sequential search for the last {\em take-all} stratum in $\W$ ordered in  the previous step; third, compute the Tchuprov-Neyman allocation in the remaining strata. Gabler et al. (2012) extended this approach  to cover both, the upper and the lower bounds on the sample strata sizes and proposed an R-function, called {\em noptcond}, as implementation of their allocation procedure. In M\"unnich et al. (2012), the authors used the Karush-Kuhn-Tucker (referred to by KKT in the sequel) conditions to derive optimal allocation formula expressed in terms of, so called,  optimal Lagrange multiplier, being the root of a highly irregular function. They analyzed several fixed point iteration routines to speed up its calculation. Recently, integer-valued optimal allocation procedures have been developed in Friedrich at al. (2015) and Wright (2017).  
	
	In this paper, (a) we prove optimality of the  recursive Neyman algorithm, (b) we introduce a modification of the Stenger-Gabler algorithm  and prove its optimality, and (c) we compare  computational efficiency of the recursive Neyman algorithm, the Stenger-Gabler algorithm and its proposed modification.
	
	Actually, we consider a more general optimization scheme described in Problem 1.

	\begin{problem}\label{probl}
		{\em Given numbers $n, a_w, b_w>0$, $w \in \W$, minimize the objective function
			\begin{equation}\label{function}
			f(\x)=\sum_{w\in \W}\,\tfrac{a_w^2}{x_w},\quad \x=(x_w)_{w\in \W}
			\end{equation}
			subject to
			$$
			\sum_{w\in \W} x_w = n\qquad \mbox{and}\qquad 0<x_w\le b_w,\quad w\in \W.
			$$}
\end{problem}
	
	Problem 1 covers the case of stratified SRSWOR by assigning $a_w=d_w$, $b_w=N_w$, $w\in\W$. Clearly, it is feasible only if $n\le \sum_{w\in\W}\,b_w$. When $n=\sum_{w\in\W}\,b_w$, the solution is trivial and is equal to $(b_w)_{w\in\W}$. Therefore, we assume throughout this paper that $n<\sum_{w\in\W}\,b_w$.  Since Problem 1 is a convex optimization problem, its solution exists and is unique. It is identified  in Theorem \ref{gener} below. 
	
	For $\V \subseteq \W$ by $\x^\V=(x_w^\V)_{w\in \W}$ we denote the vector with entries 
	\begin{equation}\label{xop}x_w^\V=\left\{\begin{array}{ll}
		b_w, &\mathrm{for}\;  w\in \V, \\
		a_w\,s(\V), & \mathrm{for}\; w\not\in  \V,\end{array}\right.
	\end{equation}	
	where $s$ is a strictly positive function defined on proper subsets of $\W$ by   
	\begin{equation}\label{esv}
	s(\V)= \tfrac{n-\sum_{w\in\V}\,b_w}{\sum_{w\not\in \V}\,a_w},\qquad \V\subsetneq \W.
	\end{equation}
	
	We refer to $\x^\V$ by  $\V$-allocation. It turns out that the optimal allocation of the sample among strata is of the form \eqref{xop} for a unique subset $\V\subseteq \W$.
	
	\begin{theorem}
		\label{gener} The $\V$-allocation vector $\x^\V$ solves Problem \ref{probl} if and only if
		\begin{equation}\label{asop}
		\V=\left\{w\in \W:\,c_w\,s(\V)\ge 1\right\},
		\end{equation}
		where $c_w=\tfrac{a_w}{b_w}$, $w\in \W$.
	\end{theorem}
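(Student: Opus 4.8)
The plan is to treat Problem~\ref{probl} through its Karush--Kuhn--Tucker (KKT) conditions. Since all the constraints are linear, the KKT conditions are necessary for optimality, and since $f$ is convex on the positive orthant they are also sufficient; thus a feasible $\x$ is optimal precisely when it admits admissible KKT multipliers. I would first record that the lower constraints $x_w>0$ never bind at the optimum, because $a_w^2/x_w\to\infty$ as $x_w\to 0^+$, so only the upper bounds $x_w\le b_w$ are active candidates. This reduces the bookkeeping to one multiplier per active upper bound plus one for the equality constraint.

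Next I would attach a multiplier $\lambda$ to $\sum_w x_w=n$ and multipliers $\mu_w\ge 0$ to the constraints $x_w\le b_w$, and write the stationarity equations $-a_w^2/x_w^2+\lambda+\mu_w=0$ together with the complementary slackness $\mu_w(x_w-b_w)=0$. Splitting $\W$ into the take-all set $\V=\{w:x_w=b_w\}$ and its complement, the inactive coordinates ($w\notin\V$) have $\mu_w=0$ and hence $x_w=a_w/\sqrt\lambda$, while the active ones ($w\in\V$) satisfy $c_w^2=\lambda+\mu_w\ge\lambda$, i.e. $c_w\ge\sqrt\lambda$ (note $\lambda>0$ since $\lambda=a_w^2/x_w^2$ at any inactive $w$). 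Substituting $x_w=a_w/\sqrt\lambda$ for $w\notin\V$ into $\sum_w x_w=n$ and solving for $\sqrt\lambda$ yields exactly $\sqrt\lambda=1/s(\V)$, so that $x_w=a_w\,s(\V)$ off $\V$; this is precisely the $\V$-allocation \eqref{xop}. Rewriting the two conditions with $\sqrt\lambda=1/s(\V)$ gives $c_w\,s(\V)\ge 1$ on $\V$ and $c_w\,s(\V)<1$ off $\V$ (the threshold value $c_w\,s(\V)=1$ forces $x_w=a_w s(\V)=b_w$ and hence belongs to $\V$), that is, $\V=\{w:c_w\,s(\V)\ge 1\}$.

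For the ``if'' direction I would run this backwards. Assuming $\V=\{w:c_w\,s(\V)\ge 1\}$, I first check feasibility: $\V\subsetneq\W$ and $s(\V)>0$ (at least one stratum is not take-all, as $n<\sum_w b_w$), while $c_w\,s(\V)<1$ off $\V$ gives $a_w\,s(\V)<b_w$ there and the equality $\sum_w x_w^\V=n$ holds automatically by \eqref{esv}. I then set $\lambda=s(\V)^{-2}$, $\mu_w=c_w^2-\lambda\,(\ge 0)$ on $\V$ and $\mu_w=0$ off $\V$, and verify that stationarity, complementary slackness, and dual feasibility all hold; convexity upgrades this KKT point to the global optimum, which is $\x^\V$ by uniqueness.

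The ``only if'' direction is the one requiring care. From optimality of $\x^\V$ I extract KKT multipliers, and the genuine obstacle is to pin down $\lambda$ and to match the prescribed index set $\V$ in \eqref{xop} with the true active set $\{w:x_w^\V=b_w\}$. Uniqueness of $\lambda$ follows because at least one coordinate is strictly interior (again $n<\sum_w b_w$), forcing $\sqrt\lambda$ to equal the common value $a_w/x_w^\V=1/s(\V)$ there and hence $\sqrt\lambda=1/s(\V)$. The remaining subtlety is the behaviour at the threshold $c_w\,s(\V)=1$: such a stratum has $a_w\,s(\V)=b_w$, so moving it in or out of $\V$ leaves both $s(\V)$ and the vector $\x^\V$ unchanged, and one must argue that the canonical set $\{w:c_w\,s(\V)\ge 1\}$ (which collects all threshold strata) is the one recovered. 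I expect this accounting at $c_w\,s(\V)=1$ to be the main delicate point, with everything else reducing to the routine KKT computation of the previous paragraphs.
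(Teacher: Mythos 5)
Your KKT framework and your proof of the ``if'' half coincide with the paper's own proof. The paper verifies stationarity, primal feasibility and complementary slackness for $\x^\V$ with exactly your multipliers: its equality multiplier is your $\lambda$ (the paper writes $\mu=s^2(\V)$, evidently a typo for $s^{-2}(\V)$, which is what you have), and its $\lambda_w=c_w^2-\mu$ on $\V$, $\lambda_w=0$ off $\V$, are your $\mu_w$; it then invokes existence and uniqueness of the solution of this convex problem to conclude. So the direction that carries all the weight --- and the only direction actually used later to prove optimality of \emph{rNa}, \emph{SGa} and \emph{coma} --- you have done completely and in the same way.

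The difference is in the ``only if'' half, and there your suspicion about the threshold $c_w\,s(\V)=1$ is not merely a delicate point: the canonical-set accounting you hope to push through cannot work, because the literal converse is false. If $\V$ satisfies \eqref{asop} and contains a stratum $v$ with $c_v\,s(\V)=1$, i.e.\ $b_v=a_v\,s(\V)$, then deleting $v$ increases the numerator of \eqref{esv} by $b_v$ and the denominator by $a_v$, so $s(\V\setminus\{v\})=s(\V)$ and hence $\x^{\V\setminus\{v\}}=\x^{\V}$; this vector is still optimal, yet $\V\setminus\{v\}$ violates \eqref{asop} because $v$ belongs to the set on the right-hand side. What optimality of $\x^\V$ really forces --- and what your multiplier extraction does establish --- is the two-sided inclusion $\{w:\,c_w\,s(\V)>1\}\subseteq\V\subseteq\{w:\,c_w\,s(\V)\ge 1\}$, with the right inclusion an equality only when no stratum sits exactly at the threshold. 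Note that the paper does not confront this either: its proof shows only that the set defined by \eqref{asop} yields an optimum, and uniqueness of the optimal \emph{vector} does not give uniqueness of the representing \emph{set}. So your proposal is materially equivalent to the paper's proof; where it stalls is exactly where you predicted, and the correct resolution is not a finer accounting argument but either recording the counterexample above or restating the necessity direction as the two-sided inclusion (which is all the subsequent algorithmic proofs require).
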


	The proof of Theorem \ref{gener}, based on the KKT conditions, is given in the Appendix. We use \eqref{asop}  to prove optimality of the recursive Neyman algorithm, {\em rNa}, in Section 2. In Section 3, following \eqref{asop}, we give a short proof of optimality of the Stenger-Gabler algorithm, {\em SGa} and introduce a modification, {\em coma}, for which we prove its optimality. In Section 4, we compare {\em rNa}, {\em SGa} and {\em coma} in terms of computational efficiency  (also with algorithms designed for optimal allocation with double-sided constraints: {\em noptcond} of Gabler et al. (2012) and {\em capacity scaling} of Friedrich et al. (2015)). As pointed out in M\"unnich et al. (2015), computational  efficiency of allocation algorithms becomes an issue "in cases with many strata or when the optimal allocation has to be applied repeatedly, such as in iterative solutions of  stratification problems". For the latter issue the reader is referred  to Dalenius and Hodges (1959), Lednicki and Wieczorkowski (2003), Guning and Horgan (2004), Kozak and Verma (2006) and Baillargeon and Rivest (2011). Moreover, in Section 4 we also compare variances of estimators based on the optimal intger-valued allocation (obtained e.g. with the {\em capacity scaling} algorithm) and of those based on the integer-rounded optimal allocation (obtained e.g. with the {\em rNa}).

	\section{The recursive Neyman algorithm}
	In this section, we prove that the {\em rNa} (Remark 12.7.1 in S\"arndal et al., 1992), leads to the allocation that minimizes  \eqref{d2st} under the constraints \eqref{cons1} and \eqref{upper}. For  a generalized setting of this minimization problem, Problem \ref{probl}, the algorithm {\em rNa} proceeds as follows:

\begin{algorithmic}[1]
	\State Let $\V_1 = \emptyset$, $r = 1$.
	\State Compute $s(\V_r)$ according to \eqref{esv}. 
	\State Let $R_r = \left\{w\in \W \setminus \V_r:\, c_w\, s(\V_r) \ge  1\right\}$. 
	\State If {$R_r =\emptyset$},  set $r^*=r$ and go to Step 5; 
	\Statex \hspace*{9.5pt} otherwise, set $\V_{r+1} = \V_r \cup R_r$, $r \gets r + 1$, and go to \footnotesize Step 2\normalsize.
	\State Return $\x^{\V_{r^*}}$ according to \eqref{xop}.
\end{algorithmic}

%
%
%
%

The numerical behavior of the {\em rNa} is illustrated in Table 1. 
\begin{table}\footnotesize
	\centering
	\begin{tabular}{||r|r|r|r|r|r||r|r|r|r|r|r||}
		\hline\hline
		$w$ & $c_w$ & $c_ws(\V_1)$  & $c_ws(\V_2)$  & $c_ws(\V_3)$ & $x_w^{\V_4}$ & $w$ & $c_w$ & $c_ws(\V_1)$  & $c_ws(\V_2)$  & $c_ws(\V_3)$ & $x_w^{\V_4}$ \\
		\hline
		1 &  0.33 & 0.062 & 0.1174 & 0.1303 & 130.3 &  11 & 2.37 & 0.444 & 0.8349 & 0.927 & 927.3 \\
		{\bf 2} & 2.65 & 0.480 & 0.9016 & {\bf 1.0011} & {\bf 1000} &  12 & 0.36 & 0.068 & 0.1282 & 0.1423 & 142.3 \\
		3 & 0.15 & 0.029 & 0.0543 & 0.0603 & 60.4 & 13 & 0.14 & 0.026 & 0.0493 & 0.0547 & 54.7 \\
		4 & 0.66 & 0.123 & 0.2316 & 0.2571 & 257.2 &  14 & 0.37 & 0.07 & 0.1316 & 0.1462 & 146.2  \\
		5 & 0.15 & 0.028 & 0.0519 & 0.0577 & 57.7 & {\bf 15} & 4.25 & 0.796 & {\bf 1.4967} & {\bf *} & {\bf 1000} \\
		{\bf 6} & 15.45 & {\bf 2.895} & {\bf *} & {\bf *} & {\bf 1000} & 16 & 0.39 & 0.074 & 0.1386 & 0.1539 & 153.9 \\
		7 & 1.49 & 0.279 & 0.5239 & 0.5817 & 581.9 & {\bf 17} & 10.21 & {\bf 1.913} & {\bf *} & {\bf *} & {\bf 1000} \\
		8 & 1.74 & 0.326 & 0.612 & 0.6796 & 679.7 & 18 & 0.10 & 0.018 & 0.0339 & 0.0376 & 37.6 \\
		9 & 0.30 & 0.056 & 0.1057 & 0.1173 & 117.3 & 19 & 0.23 & 0.044 & 0.0827 & 0.0918 & 91.8  \\
		10 & 0.93 & 0.174 & 0.3278 & 0.364 & 364.1 & 	20 & 0.51 & 0.095 & 0.1779 & 0.1975 & 197.6 \\
		\hline
	\end{tabular}
	\caption{\label{tab0:} 	\footnotesize For an artificial population with 20 strata, $n=8000$, $(c_w)_{w\in \W}$ generated as absolute values of independent Cauchy random variables and $b_w=1000$, $w\in\W$, the {\em rNa} assigns as {\em take-all} strata: $R_1 = \{6, 17\}$, $R_2 = \{15\}$, $R_3 = \{2\}$, and it stops at iteration 4 ($R_4 = \emptyset$) with $\V_4 = \{2, 6, 15, 17\}$. The optimal (non-integer) allocation (rounded to $0.1$)  is given in  columns labelled $x_w^{\V_4}$.} 
\end{table}
Before we proceed to prove that the {\em rNa} does indeed find the optimal solution to Problem \ref{probl}, we first derive in Lemma \ref{prop:ximo} a monotonicity property of function $s$, defined in \eqref{esv}. This property turns out to be a convenient tool in proving optimality of the {\em rNa} as well as the algorithms we consider in Section 3.

\begin{lemma}\label{prop:ximo} 
	Let $\A,\B\subset  \W$ be such that $\A \cap \B = \emptyset$ and $\A \cup \B\subsetneq \W$. Then
	\begin{equation}\label{ximo}
		s(\A \cup \B)\ge s(\A)\quad\mbox{if and only if}\quad s(\A)\sum_{w\in \B}\,a_w \geq \sum_{w\in \B}\,b_w.
	\end{equation}
\end{lemma}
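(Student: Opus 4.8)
The plan is to reduce both sides of the claimed equivalence to a single cross-multiplied inequality by substituting the definition \eqref{esv} of $s$. First I would use the disjointness $\A\cap\B=\emptyset$ to split the sums appearing in $s(\A\cup\B)$: since $\B\subseteq\W\setminus\A$, the numerator becomes $n-\sum_{w\in\A}b_w-\sum_{w\in\B}b_w$ and the denominator becomes $\sum_{w\notin\A}a_w-\sum_{w\in\B}a_w$. To keep the algebra transparent I would introduce the shorthand $P=n-\sum_{w\in\A}b_w$ and $Q=\sum_{w\notin\A}a_w$ for the numerator and denominator of $s(\A)$, together with $\alpha=\sum_{w\in\B}a_w$ and $\beta=\sum_{w\in\B}b_w$. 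Then $s(\A)=P/Q$ and $s(\A\cup\B)=(P-\beta)/(Q-\alpha)$, and the asserted equivalence reads $\tfrac{P-\beta}{Q-\alpha}\ge\tfrac{P}{Q}\iff\tfrac{P}{Q}\,\alpha\ge\beta$.

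The key preliminary observation is that both denominators are strictly positive. Indeed $\A\cup\B\subsetneq\W$ forces $\A\subsetneq\W$, so $\W\setminus\A\neq\emptyset$ and $Q=\sum_{w\notin\A}a_w>0$; likewise $\W\setminus(\A\cup\B)\neq\emptyset$, so $Q-\alpha=\sum_{w\notin\A\cup\B}a_w>0$. With both $Q>0$ and $Q-\alpha>0$ in hand, I can cross-multiply the left-hand inequality without reversing its direction: $\tfrac{P-\beta}{Q-\alpha}\ge\tfrac{P}{Q}$ is equivalent to $(P-\beta)Q\ge P(Q-\alpha)$, which expands and cancels to $P\alpha\ge\beta Q$. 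Dividing this last inequality by $Q>0$ yields exactly $s(\A)\,\alpha=\tfrac{P}{Q}\,\alpha\ge\beta$, i.e. $s(\A)\sum_{w\in\B}a_w\ge\sum_{w\in\B}b_w$, which is the right-hand side of \eqref{ximo}.

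Since every step above is a genuine equivalence, reading the chain backwards establishes the converse as well, and the proof is complete. The only point requiring care --- and the nearest thing to an obstacle --- is the sign bookkeeping: the direction of the inequality is preserved only because both $Q$ and $Q-\alpha$ are positive, which is precisely where the hypothesis $\A\cup\B\subsetneq\W$ (and hence $\A\subsetneq\W$) enters. Notably, no appeal to the strict positivity of $s$ itself is needed.
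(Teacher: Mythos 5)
Your proof is correct and follows essentially the same route as the paper: both reduce \eqref{ximo} to a single cross-multiplied inequality ($P\alpha \ge \beta Q$ in your notation), the paper merely packaging this as an abstract equivalence \eqref{genine} before substituting, while you substitute directly and write $s(\A\cup\B)$ by subtracting the $\B$-sums from $s(\A)$'s numerator and denominator rather than the reverse. Your closing remark is a nice touch: the paper's auxiliary inequality assumes the numerators are nonnegative, whereas you correctly observe that only strict positivity of the two denominators (guaranteed by $a_w>0$ and $\A\cup\B\subsetneq\W$) is actually needed.
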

\begin{proof}
	Clearly, for $\alpha, \beta, \delta \ge 0$ and $\gamma>0$, we have 
	\begin{equation}\label{genine}
	\tfrac{\alpha}{\gamma}\ge \tfrac{\alpha+\beta}{\gamma+\delta}\quad\mbox{if and only if}\quad \tfrac{\alpha+\beta}{\gamma+\delta} \delta \ge \beta.
	\end{equation}

	Denote $\C=\A\cup\B$. Take $\alpha=n-\sum_{w\in\C}\,b_w$, $\gamma=\sum_{w\not\in \C}\,a_w$, $\beta=\sum_{w \in \B}\,b_w$, $\delta=\sum_{w \in \B}\,a_w$. Thus, $\tfrac{\alpha}{\gamma}=s(\C)$ and  $\tfrac{\alpha+\beta}{\gamma+\delta}=s(\A)$. Then \eqref{ximo} is an immediate consequence of \eqref{genine}.
\end{proof}

\begin{theorem}\label{th_rna} 
	The algorithm rNa solves Problem \ref{probl}.
\end{theorem}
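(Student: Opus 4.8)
The plan is to show that the set $\V_{r^*}$ with which {\em rNa} terminates is exactly the set characterized in \eqref{asop}, so that Theorem \ref{gener} immediately yields optimality of the returned vector $\x^{\V_{r^*}}$. Concretely, I must verify the two inclusions $\{w\in\W:\,c_w\,s(\V_{r^*})\ge1\}\subseteq\V_{r^*}$ and $\V_{r^*}\subseteq\{w\in\W:\,c_w\,s(\V_{r^*})\ge1\}$, after first confirming that the algorithm is well defined (every $s(\V_r)$ exists) and that it halts.

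The heart of the argument is a monotonicity claim: $s(\V_{r+1})\ge s(\V_r)$ at every non-terminating iteration. To get it I would rewrite the membership rule for $R_r$: for each $w\in R_r$ the inequality $c_w\,s(\V_r)\ge1$ is, since $b_w>0$, the same as $a_w\,s(\V_r)\ge b_w$. Summing over $w\in R_r$ gives $s(\V_r)\sum_{w\in R_r}a_w\ge\sum_{w\in R_r}b_w$, which is precisely the right-hand condition of \eqref{ximo} in Lemma \ref{prop:ximo} applied with $\A=\V_r$ and $\B=R_r$. Hence $s(\V_{r+1})=s(\V_r\cup R_r)\ge s(\V_r)$, and the sequence $\bigl(s(\V_r)\bigr)_r$ is nondecreasing. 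This is the step I expect to carry the most weight, and it is exactly what Lemma \ref{prop:ximo} was designed to supply.

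Two bookkeeping points remain before concluding. First, to keep $s(\V_r)$ defined I need $\V_r\subsetneq\W$ throughout; I would argue that $R_r$ can never exhaust $\W\setminus\V_r$, for otherwise summing $a_w\,s(\V_r)\ge b_w$ over all remaining $w$ and using the identity $s(\V_r)\sum_{w\notin\V_r}a_w=n-\sum_{w\in\V_r}b_w$ from \eqref{esv} would force $n\ge\sum_{w\in\W}b_w$, contradicting the standing assumption $n<\sum_{w\in\W}b_w$. Second, since each non-terminating step enlarges $\V_r$ by at least one element of the finite set $\W$, the algorithm must reach $R_{r^*}=\emptyset$ after finitely many steps; monotonicity also shows $s(\V_r)\ge s(\V_1)=s(\emptyset)>0$, so $s$ stays strictly positive.

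Finally I would match the two inclusions at termination. The stopping rule $R_{r^*}=\emptyset$ says no $w\in\W\setminus\V_{r^*}$ satisfies $c_w\,s(\V_{r^*})\ge1$, giving $\{w\in\W:\,c_w\,s(\V_{r^*})\ge1\}\subseteq\V_{r^*}$. Conversely, any $w\in\V_{r^*}$ entered some $R_r$ with $c_w\,s(\V_r)\ge1$; because $c_w>0$ and $s(\V_{r^*})\ge s(\V_r)$ by monotonicity, we still have $c_w\,s(\V_{r^*})\ge1$, which gives the reverse inclusion. Therefore $\V_{r^*}=\{w\in\W:\,c_w\,s(\V_{r^*})\ge1\}$ satisfies \eqref{asop}, and Theorem \ref{gener} proves that $\x^{\V_{r^*}}$ solves Problem \ref{probl}.
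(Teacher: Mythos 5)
Your proof is correct and follows essentially the same route as the paper: you use Lemma \ref{prop:ximo} with $\A=\V_r$, $\B=R_r$ to obtain the monotone chain $s(\V_1)\le\ldots\le s(\V_{r^*})$, then derive the two inclusions of \eqref{asop} from the stopping rule and from monotonicity, exactly as in the paper's sufficiency/necessity argument. Your only addition is an explicit justification that $\V_r\cup R_r\subsetneq\W$ (via the contradiction $n\ge\sum_{w\in\W}b_w$), a hypothesis of Lemma \ref{prop:ximo} that the paper asserts without proof, so if anything your write-up is slightly more complete.
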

\begin{proof}
 According to \eqref{asop}, in order to prove that  $\x^{\V_{r^*}}$ in \eqref{xop} is the optimal allocation, we need  to show that  
 \begin{equation}\label{kkkk}
 w\in \V_{r^*}\quad\mbox{if and only if}\quad c_w\,s(\V_{r^*})\ge 1.
 \end{equation} 

For $r^*=1$ we have $\V_{r^*}=\emptyset$ and $R_1=\emptyset$, i.e. \eqref{kkkk} trivially holds. 

Consider  $r^*>1$. Since $n<\sum_{w\in \W}\,b_w$ we have $r^*\le K$, where $K$ is the number of strata.

{\em Sufficiency:} First, assume that $c_w\,s(\V_{r^*})\ge 1$ and $w\not\in \V_{r^*}$. Then, \footnotesize Step 4 \normalsize of {\em rNa} yields $c_w\,s(\V_{r^*})<1$ for $w\not \in \V_{r^*}$, which is  a contradiction. 

{\em Necessity:} By \footnotesize Step 3 \normalsize of {\em rNa}, we have $s(\V_r)\,a_w\geq b_w$, $w \in R_r$, for every $r \in \{1, \ldots, r^*-1\}$. Summing these inequalities over $w\in R_r$ we get the second inequality in \eqref{ximo} with $\A = V_r$, and $\B = R_r$. Since $\V_r \cup R_r\subsetneq \W$ and $\V_r\cap R_r=\emptyset$, by Lemma \ref{prop:ximo}, the first inequality in \eqref{ximo} follows. Consequently,
		\begin{equation}\label{svk} s(\V_1) \le \ldots \le s(\V_{r^*}).
		\end{equation}
		
		Now, assume that $w\in \V_{r^*}$. Thus, $w\in R_r$ for some $r\in\{1,\ldots,r^*-1\}$. Then, again using \footnotesize Step 3 \normalsize of {\em rNa}, we get $c_w s(\V_r)\geq 1$.  Consequently, \eqref{svk} yields $c_w s(\V_{r^*})  \geq 1$.
\end{proof}

\section{Revisiting the Stenger and Gabler methodology} 

Stenger and Gabler (2005, Lemma 1)  proposed another allocation algorithm  and proved its optimality. In contrast to the {\em rNa}, their algorithm is based on ordering the set of strata labels $\W$  with respect to $(c_w)_{w \in \W}$. In this section, we adjust Stenger and Gabler's algorithm to the general scheme of Problem \ref{probl} (such adjusted algorithm is  also referred to by {\em SGa}) and give a short proof of its optimality based on \eqref{asop}. We  also propose a modification  called {\em coma}. To describe both algoritms it is convenient to introduce the  notation: $\V_1 = \emptyset$ and $\V_r = \{1, \ldots, r-1\}$ for $r>1$. 

The algorithm {\em SGa} proceeds as follows: 

\begin{algorithmic}[1]
	\State Reorder $\W = \{1, \ldots, K\}$ according to $ c_1 \ge \ldots \ge c_K$.
	\State Let $r = 1$.
	\State Compute $s(\V_r)$ according to \eqref{esv}. 
	\State If $c_r s(\V_r) < 1$, set $r=r^*$ and go to \footnotesize Step 5\normalsize;
	\Statex \hspace*{9.5pt} otherwise, let $r \gets r + 1$, and go to \footnotesize Step 3\normalsize.	
	\State Return $\x^{\V_{r^*}}$ according to \eqref{xop}.
\end{algorithmic}

\begin{proposition}\label{prop:SGa} The algorithm SGa solves Problem \ref{probl}.
\end{proposition}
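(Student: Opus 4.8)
The plan is to reduce everything to the fixed-point characterization of Theorem \ref{gener}: it suffices to prove that the set $\V_{r^*}$ returned by SGa satisfies $w\in \V_{r^*}$ if and only if $c_w\,s(\V_{r^*})\ge 1$. After Step 1 the labels are ordered so that $c_1\ge\ldots\ge c_K$, and by construction $\V_{r^*}=\{1,\ldots,r^*-1\}$ is an initial segment. Hence the characterization amounts to checking $c_w\,s(\V_{r^*})\ge 1$ for every $w\le r^*-1$ and $c_w\,s(\V_{r^*})<1$ for every $w\ge r^*$. As in the proof of Theorem \ref{th_rna}, the argument first passes through $r^*\le K$, which follows from $n<\sum_{w\in\W}\,b_w$ exactly as there, so that all the sets $\V_r$ invoked below are proper subsets of $\W$ and $s$ is defined on them.

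First I would establish the monotonicity chain $s(\V_1)\le\ldots\le s(\V_{r^*})$. For each $r\in\{1,\ldots,r^*-1\}$ the algorithm did not stop at Step 4, so $c_r\,s(\V_r)\ge 1$, i.e. $s(\V_r)\,a_r\ge b_r$. Since $\V_{r+1}=\V_r\cup\{r\}$ is a disjoint union with $\V_{r+1}\subsetneq\W$, applying Lemma \ref{prop:ximo} with $\A=\V_r$ and $\B=\{r\}$ identifies $s(\V_r)\,a_r\ge b_r$ as precisely the right-hand inequality in \eqref{ximo}, whose left-hand inequality gives $s(\V_{r+1})\ge s(\V_r)$.

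For the necessity direction, take $w\in\V_{r^*}$, so $w\le r^*-1$. At the iteration with loop index $r=w$ the algorithm continued, which means $c_w\,s(\V_w)\ge 1$; since $w<r^*$, the monotonicity chain yields $s(\V_w)\le s(\V_{r^*})$, whence $c_w\,s(\V_{r^*})\ge c_w\,s(\V_w)\ge 1$. For the sufficiency direction, take $w\notin\V_{r^*}$, so $w\ge r^*$. The stopping rule at Step 4 gives $c_{r^*}\,s(\V_{r^*})<1$, and because the ordering forces $c_w\le c_{r^*}$ while $s(\V_{r^*})>0$, we get $c_w\,s(\V_{r^*})\le c_{r^*}\,s(\V_{r^*})<1$. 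This simultaneously disposes of the degenerate case $r^*=1$, where $\V_{r^*}=\emptyset$ and every $w$ satisfies $c_w\,s(\emptyset)\le c_1\,s(\emptyset)<1$.

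The proof is essentially a transcription of the argument for rNa, so I do not anticipate a genuine technical obstacle. The one conceptual point worth isolating is that the descending ordering of $(c_w)_{w\in\W}$ is exactly what allows the single stopping inequality $c_{r^*}\,s(\V_{r^*})<1$ to propagate to all $w\ge r^*$ at once, whereas rNa needed Step 4 to exclude each over-allocated stratum individually. The only place demanding care is the boundary bookkeeping, namely verifying $r^*\le K$ and that $\V_{r^*}$ is an initial segment, so that Lemma \ref{prop:ximo} is applied only to proper subsets of $\W$.
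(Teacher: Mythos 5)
Your proof is correct and follows essentially the same route as the paper's: reduction to the characterization \eqref{asop}, the monotonicity chain $s(\V_1)\le\ldots\le s(\V_{r^*})$ obtained from Lemma \ref{prop:ximo} with $\A=\V_r$, $\B=\{r\}$ for the necessity direction, and the descending ordering of $(c_w)$ combined with the stopping inequality $c_{r^*}s(\V_{r^*})<1$ for the sufficiency direction. The only cosmetic difference is that you absorb the case $r^*=1$ into the sufficiency argument, whereas the paper disposes of it separately; the logic is identical.
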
 

\begin{proof}	According to \eqref{asop}, in order to prove that  $\x^{\V_{r^*}}$ in \eqref{xop} is the optimal allocation, we need  to show that
	\begin{equation}\label{kkkkk}
	r \in \V_{r^*}\quad\mbox{if and only if}\quad c_r\,s(\V_{r^*})\ge 1.
	\end{equation}

For $r^*=1$ we have $\V_{r^*}=\emptyset$ and $c_rs(\V_{r^*})<1$ for all $r\in \W$, i.e. \eqref{kkkkk} trivially holds. 

Consider $r^*>1$. As in the previous proof, we have $r^*\le K$.

{\em Sufficiency:} Assume that $c_r\,s(\V_{r^*})\ge 1$ and $r \not\in \V_{r^*}$, i.e. $r \geq r^*$. By \footnotesize Step 4 \normalsize of {\em SGa}, we have $c_{r^*}s(\V_{r^*})<1$. Since  $(c_r)_{r=1,\ldots,K}$ is non-increasing, it follows that  $c_r\,s(\V_{r^*})<1$ for $r \geq r^*$, which is a contradiction. Thus, $r\in \V_{r^*}$.  

{\em Necessity:}  For $r\in \V_{r^*}$, in view of \footnotesize Step 4 \normalsize of {\em SGa}, we have $c_r\,s(\V_r)\ge 1$. Thus, the second inequality in \eqref{ximo}  is satisfied with  $\A = \V_r$ and $\B = \{r\}$. Since $\V_r\cup\{r\}\subsetneq \W$ and $\V_r\cap\{r\}=\emptyset$, by Lemma \ref{prop:ximo},  the first inequality in \eqref{ximo} follows. Consequently, 
$$
s(\V_1)\le \ldots\le s(\V_{r^*}).
$$
Hence $c_r\,s(\V_{r^*})\ge 1$ for  $r\in \V_{r^*}$. 
\end{proof}

Finally, we propose a new algorithm, {\em coma} (named after \underline{c}hange \underline{o}f \underline{m}onotonicty \underline{a}lgorithm), which is a  modification of the approach from Stenger and Gabler (2005). To describe this algorithm, it is convenient to denote $s(\W)=0$. The set of strata labels $\W$ needs ordering as in {\em SGa}.  The algorithm {\em coma} proceeds as follows: 

\begin{algorithmic}[1]
	\State Reorder $\W = \{1, \ldots, K\}$ according to $ c_1 \ge \ldots \ge c_K$.
	\State Let $r = 1$.
	\State Compute $s(\V_r)$ and  $s(\V_{r+1})$ according to \eqref{esv}. 
	\State If $s(\V_r) > s(\V_{r+1})$, set $r^*=r$ and go to \footnotesize Step 5\normalsize;
	\Statex \hspace*{9.5pt} otherwise, let $r \gets r + 1$, and go to \footnotesize Step 3\normalsize.	
	\State Return $\x^{\V_{r^*}}$ according to \eqref{xop}.
\end{algorithmic}

\begin{proposition}\label{prop:coma}
The algorithm coma solves Problem \ref{probl}.
\end{proposition}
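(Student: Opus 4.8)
The plan is to show that \emph{coma} halts at exactly the same iteration $r^*$ as \emph{SGa}, so that its optimality follows immediately from Proposition \ref{prop:SGa} (and hence from the characterization \eqref{asop}). The key observation is that the two stopping rules, although phrased differently, are logically equivalent at every step. To see this, I would fix $r \in \{1,\ldots,K-1\}$ and apply Lemma \ref{prop:ximo} with $\A = \V_r$ and $\B = \{r\}$. Since $\V_r \cup \{r\} = \V_{r+1}\subsetneq \W$ and $\V_r \cap \{r\} = \emptyset$, the lemma yields $s(\V_{r+1}) \ge s(\V_r)$ if and only if $s(\V_r)\,a_r \ge b_r$, that is, if and only if $c_r\,s(\V_r) \ge 1$. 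Taking contrapositives, the \emph{coma} test $s(\V_r) > s(\V_{r+1})$ holds precisely when the \emph{SGa} test $c_r\,s(\V_r) < 1$ holds. Thus for every $r < K$ the two algorithms make the identical branching decision in their respective Step 4.

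Next I would treat the terminal index $r = K$ separately, because there Lemma \ref{prop:ximo} is not available: $\V_K \cup \{K\} = \W$ is not a proper subset. Using the convention $s(\W) = 0$ together with the strict positivity of $s$ on proper subsets, the \emph{coma} test $s(\V_K) > s(\V_{K+1}) = 0$ is automatically satisfied, so \emph{coma} stops at $r = K$ whenever it reaches it. On the \emph{SGa} side one computes directly $c_K\,s(\V_K) = \big(n - \sum_{w<K} b_w\big)/b_K < 1$, because $n < \sum_{w\le K} b_w$; hence \emph{SGa} likewise stops at $r = K$ if it gets there. Combining this boundary fact with the step-by-step equivalence above, I would conclude that neither algorithm stops strictly before the other, so both return the same index $r^*$ and therefore the same $\V$-allocation $\x^{\V_{r^*}}$. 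Since, by Proposition \ref{prop:SGa}, this $\V_{r^*}$ satisfies \eqref{asop}, the vector $\x^{\V_{r^*}}$ solves Problem \ref{probl}.

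The only delicate point, and the reason the convention $s(\W) = 0$ is introduced, is the boundary case $r = K$, where the monotonicity lemma does not apply and a short direct computation is needed instead; everything else is a faithful translation of the \emph{SGa} stopping rule through the equivalence furnished by Lemma \ref{prop:ximo}. This same equivalence also explains the name of the algorithm: the chain $s(\V_1) \le \cdots \le s(\V_{r^*})$ followed by $s(\V_{r^*}) > s(\V_{r^*+1})$ shows that $r^*$ is exactly the index at which the sequence $\big(s(\V_r)\big)_r$ first changes from non-decreasing to decreasing.
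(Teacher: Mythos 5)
Your proof is correct and is essentially the paper's own argument: the paper's one-line proof ("follows from the equivalence \eqref{ximo} after referring to Proposition \ref{prop:SGa}") is precisely your step-by-step equivalence of the \emph{coma} and \emph{SGa} stopping rules via Lemma \ref{prop:ximo} with $\A=\V_r$, $\B=\{r\}$. Your explicit treatment of the boundary case $r=K$, where the lemma does not apply and the convention $s(\W)=0$ does the work, is a detail the paper leaves implicit, and it is handled correctly.
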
  

\begin{proof}
	The proof follows from the equivalence \eqref{ximo} after referring to Proposition \ref{prop:SGa}. 
\end{proof}

Table \ref{tab00:} shows how {\em SGa} and {\em coma} work for the artificial population with 20 strata  considered in Table \ref{tab0:}. 

\begin{table}
	\centering 	\footnotesize
	\begin{tabular}{|r|r|r|r|r|r|r|}
		\hline
		$w$ & $r$ & $c_r$ & $s(\V_r)$ & $s(\V_{r+1})$ & $c_rs(\V_r)$  & $s(\V_r)/s(\V_{r+1})$ \\
		\hline
		{\bf 6} &	1 & 15.45  & 0.18736 & 0.25690 & 2.89497 & 0.72931   \\
		{\bf 17} &	2 & 10.21   & 0.25690  & 0.35221  & 2.62357 & 0.72939 \\
		{\bf 15} &	3 & 4.25  & 0.35221 &   0.39105 & 1.49666  &  0.90068  \\
		{\bf 2} &	4 & 2.65  &  0.39105 & 0.39115  &  1.00107  & 0.99974  \\
		11 &	5 &  2.37   & 0.39115 & 0.38190  & {\bf 0.92728} & {\bf 1.0242}\\
		\hline
	\end{tabular}
	\caption{\label{tab00:} \footnotesize For the artificial population considered in Table \ref{tab0:} we apply {\em SGa} and {\em coma} ($w$ refers to strata labels  as in Table 1, i.e. before ordering performed in Step 1 of both algorithms). In every iteration 1--4, one stratum is assigned to the set of {\em take-all} strata: 6, 17, 15 and 2. Both procedures stop at iteration 5.}
\end{table}

\section{Numerical experiments}

In the simulations, using the R software (2019), we compared the computational efficiency of {\em rNa}, {\em SGa} and {\em coma}  as well as some known algorithms for optimal allocation. Since the efficiency of {\em SGa} and {\em coma} turned out to be quite similar, we present results only for the {\em coma}. The R code used in our experiments is available at  \href{https://github.com/rwieczor/recursive_Neyman}{https://github.com/rwieczor/recursive\_Neyman}.

Two artificial populations with several strata were constructed by iteratively (100 and 200 times) binding collections of observations (each collection of 10000 elements) generated independently from lognormal distributions with varying parameters. The logarithms of generated random variables have mean equal to 0 and standard deviations equal to $\log (1+i), \; i=1,\dots, N_{max}$, where $N_{max}$ is equal  100 and 200, respectively.  
In each iteration, the strata were created using the geometric stratification method of Gunning and Horgan (2004) (implemented in the R package {\em stratification}) with parameter 10 being the number of strata and targeted coefficient of variation equal to 0.05.

For these populations, we calculated the following vectors of parameters: populations sizes, $(N_w)_{w\in\W}$, and population standard deviations in strata, $(S_w)_{w\in\W}$, both needed for the allocation algorithms. Finally, the original order of strata  was rearranged by a random permutation. In this way  two populations with 507 and 969 strata were created (for details of the implementation see the R code available on GitHub reprository).  

We used the {\em microbenchmark} R package for numerical comparisons of computational efficiency of the algorithms. The results obtained are presented in Fig. \ref{fig2}. The simulations  suggest that the {\em rNa} is typically more efficient than  {\em coma} (and  {\em SGa}). However, this is not always the case as Fig. \ref{fig3} shows. For the experiment referred to in Fig. \ref{fig3}, we created an artificial data set with significant differences in standard deviations between the strata. 

\begin{figure}
	\begin{center}
		\includegraphics[height=100mm,width=160mm]{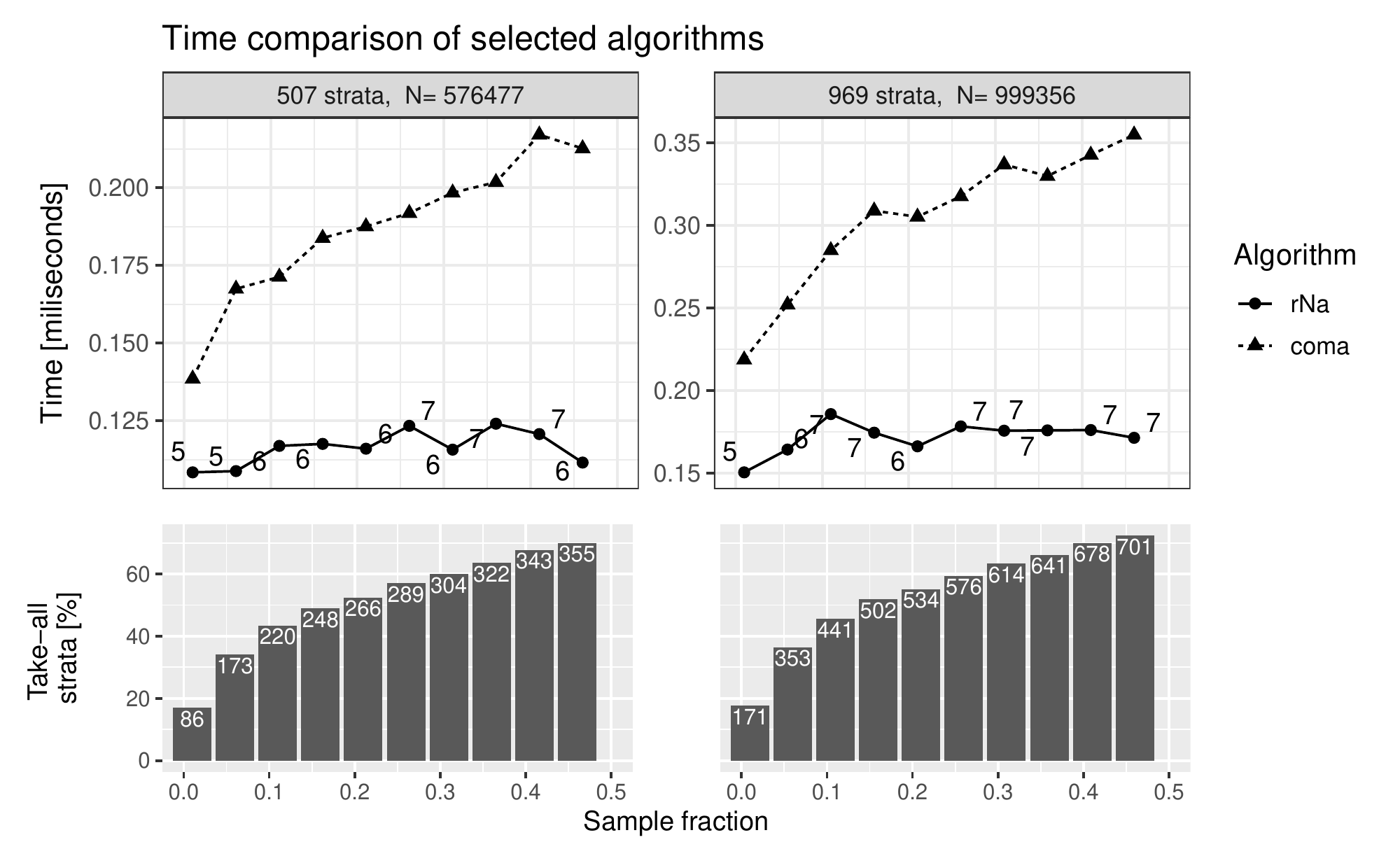}
		\caption{\label{fig2} 	\footnotesize Comparison of running times of R-implementations {\em coma} and {\em rNa} for two lognormal populations. Top graphs show the empirical median of performance times calculated from 100 repetitions. Numbers of iterations of the {\em rNa} follow its graph. Counts of {\em take-all} strata are inside bars of bottom graphs.}
	\end{center}
\end{figure}

\begin{figure}
	\begin{center}
		\includegraphics[height=100mm,width=160mm]{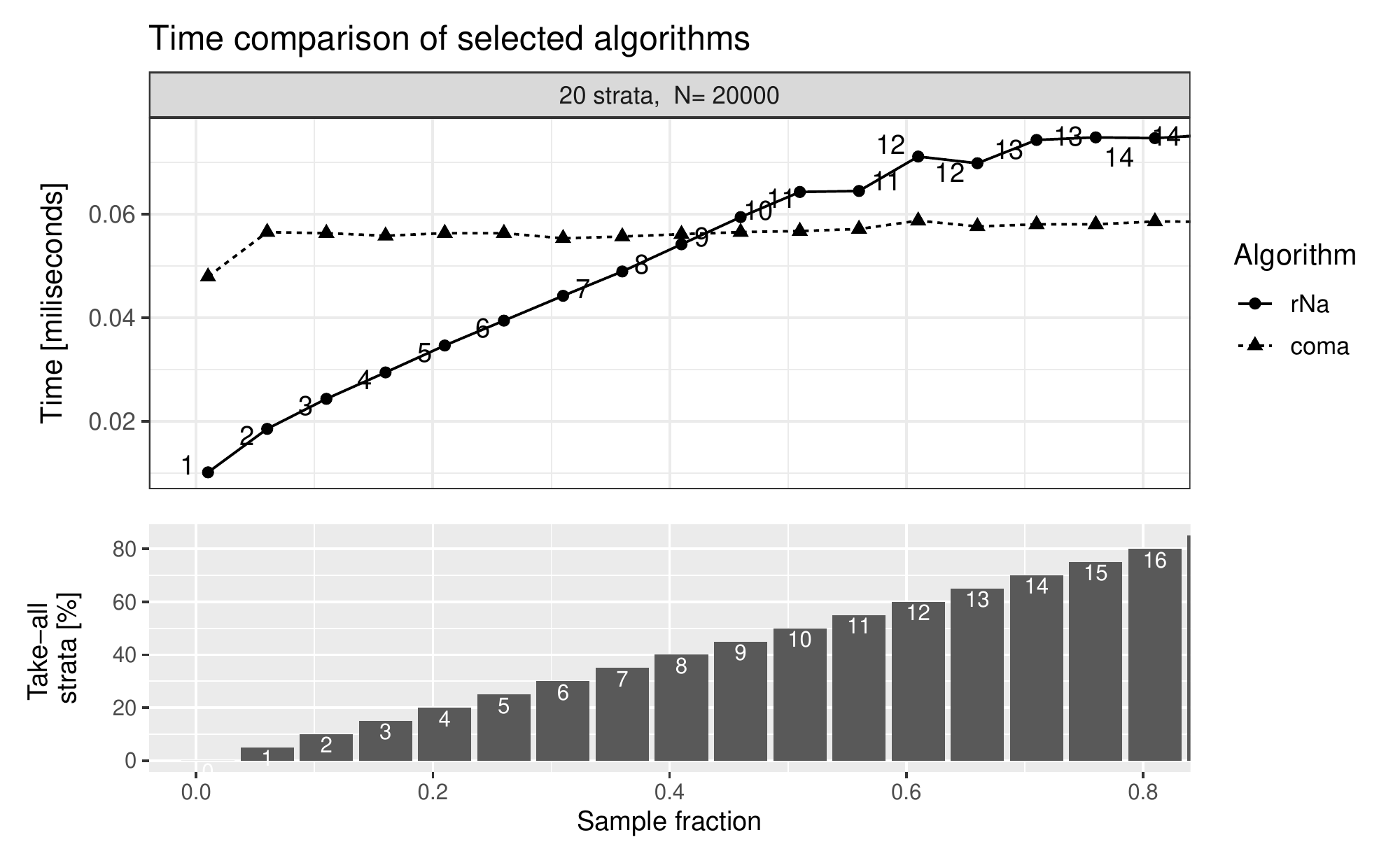}
		\caption{\label{fig3} 	\footnotesize Comparison of running times of R-implementations of  {\em coma} and {\em rNa}. Top graphs show the empirical median of performance times calculated from 100 repetitions. Computations were done for an artificial population with $S_w=10^w$, and $N_w=1000$, $w=1,\ldots,20$. Numbers of iterations of the {\em rNa} follow its graph. Counts of {\em take-all} strata  are inside bars of bottom graphs: the first two are $0,1$. }
	\end{center}
\end{figure}


We also compared computational efficiency of {\em rNa} and {\em coma} with the algorithms: {\em noptcond} of Gabler et al. (2012) and {\em capacity scaling}  of Friedrich et al. (2015), designed for, respectively, non-integer and integer allocation under both lower and upper bounds for the sample strata sizes.   Numerical experiments show that these two algorithms (with lower bounds set to zero) were considerably slower than  {\em rNa} and {\em coma}. Moreover, the design variances obtained for the optimal non-integer allocation before and after rounding (we used optimal rounding of Cont and Heidari (2014)) and for the optimal integer allocation, were practically indistinguishable, see Table  \ref{tab:3}.   

One may argue that in real life applications the number of strata may not be large and so differences in computational efficiency  are of marginal importance. Nevertheless, in some applications, like census-related surveys, the number of strata can be counted even in tens of thousands (there were more than 20 000 strata in German Census 2011, see Burgard and M\"unnich (2012)). The issue of computational complexity of optimal allocation algorithms has been addressed e.g. in M\"unnich et al. (2012). Computational efficiency  of optimal allocation algorithms becomes a crucial issue in iterative solutions of stratification problems, see e.g. Lednicki and Wieczorkowski (2003), Baillargeon and Rivest (2011) and Barcaroli (2014). In such procedures the allocation routine may be typically repeated a very large number of times (e.g. millions or more, depending on desired accuracy of approximations). 

\begin{table}[t]
	\centering 	\footnotesize
	\begin{tabular}{|r|r|r|r|r|}
		\hline
		sample  & \multicolumn{2}{|c|}{$K=507$} & \multicolumn{2}{|c|}{$K=969$} \\
		\cline{2-5}
		fraction	& $n$ & $D^2/D_0^2$  & $n$ & $D^2/D_0^2$ \\
		\hline
		 0.1 & 57648 & 0.999726 & 99936 & 0.999375\\
		\hline
		 0.2 & 115295 & 0.999964 & 199871 & 0.999948\\
		\hline
		 0.3 & 172943 & 0.999993 & 299807 & 0.999987 \\
		\hline
		 0.4 & 230591 & 0.999996 & 399742 & 0.999996 \\
		\hline
		 0.5 & 288238 & 0.999999 & 499678 & 0.999999\\
		\hline
	\end{tabular}
	\caption{\label{tab:3} 	\footnotesize Variances $D^2$ and $D_0^2$ are based on optimal non-integer and optimal integer allocations respectively. For variances $\tilde{D}^2$, based on rounded optimal non-integer allocation,  we systematically get $\tilde{D}^2/D^2_0=1$ (up to five decimal digits). }
\end{table}

\section{Final remarks and conclusions}
We proved that the recursive Neyman algorithm, {\em rNa}, is optimal under upper bounds on sample strata sizes. The approach is based on optimality of the $\V$-allocation \eqref{xop} and \eqref{asop}, derived from the KKT conditions.  We also proposed a modification, {\em coma}, of the algorithm, {\em SGa}, of Stenger and Gabler (2005) and, using the $\V$-allocation, we gave short proofs of optimality for both algorithms. 

Simulation comparisons of  computational efficiency showed that {\em rNa} is much faster than  {\em coma} and {\em SGa} (the latter two being of similar efficiency) and its relative efficiency increased with the sample fraction.  Nevertheless, there may exist  situations when {\em coma} (or {\em SGa}) happen to be more efficient than  {\em rNa}.  Ready-to-use R-implementations of the algorithms are available on CRAN repository in the new package {\em stratallo}:\newline   \href{https://cran.r-project.org/web/packages/stratallo}{https://cran.r-project.org/web/packages/stratallo}.

\vspace{5mm}\noindent\small
{\bf References} 
\begin{enumerate}
	\item  Bailargeon, S., and Rivest, P. (2011), "The construction of stratified designs in R with the package stratification," {\em Survey Methodology}, 37(1), 53-65.
	
	\item Barcaroli, G. (2014), "SamplingStrata: An R Package for the Optimization of Stratified Sampling," {\em Journal of Statistical Software}, 61(4), 1-24. 
	
	\item Boyd, S., and Vandenberghe, L. (2004), {\em Convex Optimization}, Cambridge Univ. Press.
	
	\item Burgard, J.P., and M\"unnich, R. (2012) "Modelling over and undercounts for design-based Monte Carlo studies in small area estimation: An application to the German register-assisted census," {\em Computational Statistics \& Data Analysis}, 56(10), 2856-2863.
	
	\item Cont, R., and Heidari, M. (2014), "Optimal rounding under integer constraints," {\em arXiv}, 1501.00014, 1-14.
	
	\item Friedrich, U., M\"unnich, R., de Vries, S., and  Wagner, M. (2015) "Fast integer-valued algorithm for optimal allocation under constraints in stratified sampling," {\em Computational Statistics and Data Analysis}, 92, 1-12.
	
	\item Gabler, S., Ganninger, M., and M\"unnich, R. (2012), "Optimal allocation of the sample size to strata under box constraints," {\em Metrika}, 75(2), 151-161.
	
	\item Gunning, P., and Horgan, J.M. (2004) "A new algorithm for the construction of stratum boundaries in skewed populations," {\em Survey Methodology} 30(2), 159-166.
	
	\item Kadane, J. (2005), "Optimal dynamic sample allocation among strata," {\em Journal of Official Statistics}, 21(4), 531-541.
	
	\item M\"unnich, R.T., Sachs, E.W., and  Wagner, M. (2012), "Numerical solution of optimal allocation problems in stratified sampling under box constraints," {\em Advances in Statistical  Analysis}, 96, 435-450.
	
	\item Neyman, J. (1934), "On the two different aspects of the representative method: the method of stratified sampling and the method of purposive selection," {\em Journal of the Royal Statistical Society}, 97, 558-625.

	\item R Core Team (2019), "R: A language and environment for statistical computing. R Foundation for Statistical Computing," [online]. Available at https://www.R-project.org/.
	
	\item Lednicki, B., and Wieczorkowski, R. (2003), "Optimal stratification and sample allocation between subpopulations and strata," {\em Statisics in Transition}, 6(2), 287-305.
	
	\item S\"arndal, C.-E., Swensson, B., and Wretman, J. (1992), {\em Model Assisted Survey Sampling},  New York, NY: Springer.
	
	\item Stenger, H., and Gabler, S. (2005), "Combining random sampling and census strategies - Justification of inclusion probabilities equal 1," {\em Metrika}, 61, 137-156.
	
	\item Statistics Canada (2010), {\em Survey Methods and Practices} [online]. Available at \newline https://www150.statcan.gc.ca/n1/en/pub/12-587-x/12-587-x2003001-eng.pdf 
	
	\item Tchuprov, A. (1923), "On the mathematical expectation of the moments of frequency distributions in the case of corelated observations," {\em Metron}, 2, 461-493. 
	
	\item Valliant, R., Dever, J.A., and Kreuter, F., (2018), {\em Practical Tools for Designing and Weighting Sample Surveys} (2nd ed.),  New York, NY: Springer.
	

	\item Wright, T. (2017), "Exact optimal sample allocation: More efficient than Neyman," {\em Statistics and Probability Letters}, 129, 50-57.
	
	\item Wright, T. (2020), "A general exact optimal sample allocation algorithm: With bounded cost and bounded sample sizes," {\em Statistics and Probability Letters}, 165/108829, 1-9.
\end{enumerate}

\appendix
\section{Appendix: Convex optimization scheme and the KKT conditions}
\begin{proof}[Proof of Theorem \ref{gener}.] 
Problem \ref{probl} belongs to a class of optimization problems of the form: minimize a strictly convex function $f:(0,\infty)^m\to \R$,  under  constraints 
$$
g_i(\x)\le 0,\quad  i=1,\ldots,r,\qquad \mbox{and}\qquad h_j(\x)=0,\quad j=1,\ldots,s,
$$
satisfied for all $\x\in(0,\infty)^m$, where $g_i$, $i=1,\ldots,r$, are convex and $h_j$, $j=1,\ldots,s$, are affine. It is well known, see e.g. Boyd and Vanderberghe (2004), that  in such case there exists a unique  $\x^*\in(0,\infty)^m$, such that $f$ attains its global minimum at $\x=\x^*$. The minimizer, $\x^*$, can  be identified through the set of equations/inequalities, known as the KKT conditions: 

There exist  $\lambda_i\in\R$, $i=1,\ldots,r$, and $\mu_j\in\R$, $j=1,\ldots,s$, such that
\bel{KTT1}  
\nabla f(\x^*)+\sum_{i=1}^r\,\lambda_i\nabla g_i(\x^*)+\sum_{j=1}^s\,\mu_j\nabla h_j(\x^*)= \0,
\ee 
and 
\bel{KTT2}
h_j(\x^*)=0, \quad\lambda_i\ge 0,\qquad \lambda_ig_i(\x^*)=0,\qquad 
g_i(\x^*)\le 0
\ee
for $i=1,\ldots,r$,  $j=1,\ldots,s$,

  We consider the KKT scheme with the objective function $f$ defined in \eqref{function} and with
	$$h(\x) = \sum_{w \in \W} x_w - n \qquad \mbox{and} \qquad g_w(\x)= x_w-b_w, \quad w\in \W. $$ 
	Thus $$\nabla f(\x)=-\left(\tfrac{a_w^2}{x_w^2}\right)_{w\in \W},\qquad  \nabla h(\x)=(1,\ldots,1),\qquad \nabla g_w(\x)={\mathbf 1}_w,\quad w\in \W,$$
	where $\mathbf 1_w$ denotes the vector with all entries 0, except the entry with label $w$ which is 1.
	
	Then, \eqref{KTT1} and \eqref{KTT2} read: there exist $\lambda_w\ge 0$, $w\in \W$, and $\mu\in\R$ such that
	\bel{KKT1}
	-\tfrac{a_w^2}{{x_w^*}^2}+\lambda_w+\mu=0,\quad w\in \W,
	\ee
	\bel{KKT2}
	\sum_{w\in \W}\,x_w^*=n,\quad \quad x_w^*\le b_w,\quad w\in \W,
	\ee
	and 
	\bel{KKT3}
	\lambda_w(x_w^*-b_w)= 0,\quad w\in \W.
	\ee
	
	Since Problem \ref{probl} is a convex optimization problem, its solution exists and is unique. Therefore, to prove Theorem \ref{gener} it suffices to show that conditions \eqref{KKT1} -- \eqref{KKT3} are satisfied for $\x^*=\x^\V$ with $\V$ defined in \eqref{asop}.
	
	Let $\mu=s^2(\V)>0$, with $s(\V)$ defined in \eqref{esv}, $\lambda_w=c_w^2-\mu$, $w\in \V$, and  $\lambda_w=0$ for $w\not\in  \V$. Note that  \eqref{asop} yields $\lambda_w\ge 0$ for $w\in \V$. Then \eqref{KKT1}, as well as the equality in \eqref{KKT2}, i.e.
	$$
	\sum_{w\in \W}\,x_w^\V=\sum_{w\in \V}\,b_w+s(\V)\sum_{w\not\in \V}\,a_w=n,
	$$ 
	are satisfied. 	Inequalities in \eqref{KKT2}  are  trivial for $w\in \V$, and for $w\not\in \V$  they follow from \eqref{asop}.  Finally, \eqref{KKT3} holds true, since  $\lambda_w=0$ for $w\not\in \V$ and  $x_w^\V-b_w=0$ for $w\in \V$.
\end{proof}

\end{document}